\newtheorem{thm}{Theorem}
\newtheorem{lem}[thm]{Lemma}
\newtheorem{cor}[thm]{Corollary}
\title{On a finite $2,3$-generated group of period $12$}
\author{Andrei V. Zavarnitsine}
\address{Sobolev Institute of Mathematics\\
4, Koptyug av.\\
630090, Novosibirsk\\
Russia}
\email{zav@math.nsc.ru}
\thanks{Supported by the Russian Foundation for Basic Research
(project 13--01--00505).}
\begin{document}

\begin{abstract}
Using calculations in computer algebra systems along with some theoretic results, we construct the largest finite group of period $12$ generated by an element of order $2$ and an element of order~$3$. In particular, we prove that this group has order $2^{66}\cdot3^7$.

{\sc Keywords:} periodic groups, Burnside problem \\
{\em 2010 MSC:}
   \ 20F05  
\ 20F50  
\ 20-04  

\end{abstract}

\maketitle

\section{Introduction}

A group $G$ has period $n$ if $x^n=1$ for all $x\in G$ or, equivalently, if $\mathrm{exp}(G)$ is finite and  divides $n$. Groups of period $12$ are of interest in light of the Burnside problem.

There has been a recent progress in proving local finiteness of certain classes of groups of period $12$. For example, groups of period $12$ in which the product of every two involutions has order distinct from $6$ (respectively, from $4$) are locally finite by \cite{lmm} and \cite{mm}. Groups of period $12$ without elements of order $12$ are locally finite by \cite{mam}.

A group is $2,3$-{\em generated} if it is a quotient of the free product $\mathbb{Z}_2*\mathbb{Z}_3$. A $2,3$-generated group of period $12$ will be called a $(2,3;\,12)$-{\em group}. It is not known if the free $(2,3;\,12)$-group $B$ is finite, and our aim is to study the finite quotients of $B$. The importance of this study lies in the fact that it represents the smallest unknown case among the $2$-generator groups of period $12$.

From the positive solution of the restricted Burnside problem, it easily follows that there exists a unique maximal finite $(2,3;\,12)$-group, which we denote by $B_0(2,3;\,12)$. The principal result is as follows.

\begin{thm} \label{main}
Let $B_0=B_0(2,3;\,12)$. Then the structure of $B_0$ is known. In particular, the following facts hold.
\begin{itemize}
\item $|B_0|=161\,372\,117\,156\,811\,157\,536\,768 =2^{66}\cdot3^7 $.
\item $B_0$ is solvable of derived length $4$ and Fitting length $3$; \ $Z(B_0)=1$.
\item The quotients of the derived series for $B_0$ are $\mathbb{Z}_6$, $\mathbb{Z}_{12}^2$, $\mathbb{Z}_{2}^{61}$, $\mathbb{Z}_{3}^{4}$.
\item A Sylow $2$-subgroup of $B_0$ has nilpotency class $5$ and rank $7$.
\item A Sylow $3$-subgroup of $B_0$ has nilpotency class $2$ and rank $4$.
\item $\operatorname{O}_2(B_0)$ has order $2^{62}$ and nilpotency class $2$; \ $|\Phi(B_0)|=2^{54}$.
\item $\operatorname{O}_{2,3}(B_0)/\operatorname{O}_2(B_0)\cong \mathbb{Z}_3^6$.
\item $B_0/\operatorname{O}_{2,3}(B_0)\cong \rm{SL}_2(3)\circ\mathbb{Z}_4$; in particular, the $3$-length of $B_0$ is $2$.
\item $\operatorname{O}_3(B_0)\cong \mathbb{Z}_3^4$.
\item $\operatorname{O}_{3,2}(B_0)/\operatorname{O}_3(B_0)$ has order $2^{65}$ and nilpotency class $4$.
\item $B_0/\operatorname{O}_{3,2}(B_0)\cong 3^{1+2}:2$; in particular, the $2$-length of $B_0$ is $2$.
\end{itemize}
\end{thm}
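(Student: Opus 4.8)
The plan is to combine a short structural reduction with an explicit machine computation. First I would note that every finite $(2,3;12)$-group has all element orders dividing $12$, hence its order involves only the primes $2$ and $3$ and it is solvable by Burnside's $p^aq^b$ theorem; by the positive solution of the restricted Burnside problem the finite $(2,3;12)$-groups form finitely many isomorphism types, so $B_0$ is well defined and is the largest finite quotient of the free $(2,3;12)$-group $B$, with the images of the free generators of orders exactly $2$ and $3$ (else $B_0$ would be cyclic of order at most $6$). To make the ensuing computation finite and provably complete, I would bound the ``shape'' of $B_0$: standard Hall--Higman-type bounds on the $p$-length of $p$-solvable groups of exponent $12$ give $l_2(B_0)\le 2$ and $l_3(B_0)\le 2$, hence a bound on the Fitting length, and together with the finiteness of the restricted Burnside groups $B(2,4)$ (of order $2^{12}$) and $B(2,3)$ (of order $3^3$), which bound the successive $2$- and $3$-layers, this yields an a priori bound on $|B_0|$.

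Next I would construct $B_0$ explicitly in a computer algebra system as a polycyclic group, by an iterated quotient procedure starting from $\mathbb{Z}_2*\mathbb{Z}_3$: at each stage form the largest elementary-abelian extension of the current quotient compatible with the relations $a^2=b^3=1$, then cut it down by imposing $w^{12}=1$ for a sufficient finite set of collected words $w$, re-imposing the exponent law at every layer (it does not pass automatically through quotients) and checking the consistency of the resulting power--conjugate presentation. Iterating over the alternating $2$- and $3$-layers until no proper extension survives terminates by the bound above, and the maximality built into each step identifies the output with $B_0$; I would then re-verify exponent $12$ and the generator orders on the constructed group and note that any finite $(2,3;12)$-group, having the bounded shape above, is a quotient of it, confirming maximality.

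Finally, from the explicit presentation I would read off all the asserted invariants by direct computation: the order $2^{66}\cdot3^7$; the derived series with quotients $\mathbb{Z}_6$, $\mathbb{Z}_{12}^2$, $\mathbb{Z}_2^{61}$, $\mathbb{Z}_3^4$ (so derived length $4$); the Fitting series (so Fitting length $3$) and $Z(B_0)=1$, $\Phi(B_0)$; the subgroups $\operatorname{O}_2(B_0)$, $\operatorname{O}_{2,3}(B_0)$, $\operatorname{O}_3(B_0)$, $\operatorname{O}_{3,2}(B_0)$ together with $\operatorname{O}_{2,3}(B_0)/\operatorname{O}_2(B_0)\cong\mathbb{Z}_3^6$, $B_0/\operatorname{O}_{2,3}(B_0)\cong\mathrm{SL}_2(3)\circ\mathbb{Z}_4$, $\operatorname{O}_3(B_0)\cong\mathbb{Z}_3^4$, $B_0/\operatorname{O}_{3,2}(B_0)\cong 3^{1+2}{:}2$ and the resulting $3$-length and $2$-length; and the nilpotency classes and ranks of the Sylow subgroups and of $\operatorname{O}_2(B_0)$ and $\operatorname{O}_{3,2}(B_0)/\operatorname{O}_3(B_0)$. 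The main obstacle is the interplay in the first two steps: one must prove a structural bound sharp enough that the quotient computation provably terminates and really captures the whole $(2,3;12)$-quotient, and then correctly impose $w^{12}=1$ layer by layer on enough words --- the size of the intermediate modules being the practical bottleneck.
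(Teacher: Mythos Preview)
Your plan has a genuine gap in the ``a priori bound'' step, and without it the rest of the argument does not establish maximality.

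You assert that the restricted Burnside groups $B(2,4)$ (of order $2^{12}$) and $B(2,3)$ (of order $3^3$) ``bound the successive $2$- and $3$-layers'' of $B_0$. This is false: the $p$-layers in the Fitting series of a $2$-generated group are not themselves $2$-generated, so there is no reason they should embed in a $2$-generator Burnside group. Indeed the theorem you are trying to prove asserts $|\operatorname{O}_2(B_0)|=2^{62}$, which already dwarfs $|B(2,4)|=2^{12}$. The only layers bounded this way are the \emph{top} quotients $B_0/\operatorname{O}_{p'}(B_0)$, which does not control the lower ones. Likewise, the Hall--Higman-type bounds $l_2\le 2$ and $l_3\le 2$ require some care (the primes $2$ and $3$ are precisely the exceptional ones), and in any case bounding the $p$-length does not by itself bound the order. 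So your ``provable termination'' claim for the layer-by-layer construction is unsupported: when the algorithm stops finding extensions you cannot conclude that none exist.

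The paper's route is genuinely different at this point. It constructs a candidate $G_0$ of order $2^{66}\cdot 3^7$ heuristically (the Solvable Quotient search simply runs out of memory rather than provably terminating), and then proves maximality by analysing every possible extension $1\to V\to E\to G_0\to 1$ with $V$ an irreducible $\mathbb{F}_pG_0$-module. The key reductions are: (i) a lemma showing that elements of maximal $p$-power order force $V$ to have \emph{cubic} action of order-$4$ elements when $p=2$ and \emph{quadratic} action of order-$3$ elements when $p=3$; (ii) for $p=2$ this cuts the list of irreducibles (over $G_0/\operatorname{O}_2(G_0)$, of manageable size) to five; (iii) for $p=3$, where $G_0/\operatorname{O}_3(G_0)$ is far too large to enumerate irreducibles, a separate lemma shows that the universal cyclic $\mathbb{F}_3B$-module with quadratic action is only $16$-dimensional, yielding three candidate modules. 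One then runs through the finitely many cohomology classes in $H^2(G_0,V)$ (with a further reduction via linear maps $\overline{\psi}_g$ when the dimension is large) and checks that no extension is simultaneously of period $12$ and $2,3$-generated. This module-restriction step is the substantive idea missing from your outline; once maximality is established, reading off the structural invariants is indeed routine, as you say.
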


By knowing the structure of $B_0$ we mean that this group is constructed explicitly in the computer systems {\textsf{GAP}} \cite{gap} and {\textsf{Magma}} \cite{mag}, and that we are able to prove that this group is indeed $B_0(2,3;\,12)$.
We obtain $B_0$ as a homomorphic image of the finitely presented group
\begin{equation}\label{gd}
G=\langle\, a,b\mid 1=a^2=b^3=w_1^{12}=\ldots =w_{21}^{12}\,\rangle,
\end{equation}
where $w_1,\ldots,w_{21}$ are explicitly given words (\ref{rels}).
The open question of whether $G$ is finite is of interest, because a positive answer would readily imply that $B=B_0$.

A code for \textsf{Magma} that confirms various computation steps made in the paper can be downloaded from \cite{code}.

\section{The algorithm}

This idea is to first construct a certain ``large'' finite (2,3;12)-group and then prove its maximality.
The construction part uses the ``Solvable Quotient'' algorithms available in \textsf{GAP} and \textsf{Magma}.

Let $F=\langle x,y \rangle$ be a free $2$-generator group. The elements of $F$ will be viewed as words in $\{x,y\}$.
Let
\begin{equation}\label{bdef}
B=B(2,3;\,12)=\langle a,b\mid 1=a^2=b^3=w(a,b)^{12}, \ \forall w \in F \rangle
\end{equation}
be the free (2,3;\,12)-group. Clearly, every (2,3;\,12)-group is a quotient of $B$. The first observation is that we can simplify slightly the set of relators for $B$ by leaving only the words $w$ that can be expressed as words in $s=ab$ and $t=ab^2$ and discarding the equivalent words, i.\,e., cyclic permutations and inversions, which do not
change the group. The remaining set of words ordered by length begins as follows
\begin{equation}\label{lst}
L=\{s; st; s^2; s^3t, s^2t^2;  s^4t, s^3t^2, (st)^2s;  \ldots\}
\end{equation}
and may be substituted for $F$ in (\ref{bdef}). The advantage of using the set $L$ in place of $F$ is that it is less redundant and straightforward to construct. We only sketch the proof of the above observation.
\begin{proof}
For every $w\in F$, we have
$$
w(a,b)=b^{\varepsilon_0}ab^{\varepsilon_1}a\ldots ab^{\varepsilon_n} \sim ab^{\varepsilon_1}a\ldots b^{\varepsilon_{n-1}}ab^{\varepsilon_n'}=v(s,t)a^\varepsilon
$$
for some $n\geqslant 0$, where $\varepsilon_0,\varepsilon_n=0,1,2$; $\varepsilon_1,\ldots \varepsilon_{n-1}=1,2$; $\varepsilon_n'\equiv \varepsilon_n +  \varepsilon_0 \pmod{3}$; $v$ is a suitable element of $F$; the symbol ``$\sim$'' means a cyclic permutation; and $\varepsilon=0,1$. The trailing letter ``a'', if present, can be eliminated by a (repeated) cyclic permutation that moves first letter in $v(s,t)$ to the end and then an application of one of the identities
$$
sas=t,\quad tat=s, \quad sat=a, \quad tas=a.
$$
This is always possible, unless we are left with one of the words $a$, $sa$, $ta$, which are equivalent to either $a$ or $b$ and hence can be discarded as well. On the remaining words in $\{s,t\}$ of a given length $l$, a dihedral group $D_{2l}$ acts by effecting cyclic permutations and inversion. The orbit representatives of such actions for $l\geqslant 1$ are collected in~(\ref{lst}).
\end{proof}

We now briefly explain the idea behind the algorithm for finding a large finite $(2,3;\, 12)$-group. Starting off
with the group
$$G=\langle a,b\mid 1=a^2=b^3=r^{12}, \ \forall r \in R \rangle,$$
where $R$ is initially an empty set,
and a known finite $(2,3;\, 12)$-quotient $G_0$ of $G$, say $G_0=\mathbb{Z}_6$, we apply the ``Solvable Quotient'' method to find a bigger quotient $G_1$ of order $2^e\cdot |G_0|$ successively for $e=1,2,\ldots$ until we either succeed or $e$ exceeds the maximal exponent \texttt{max\_e}. If the found quotient has period $12$ then we replace $G_0$ with $G_1$ and start over. Otherwise, we search through the above list $L$ for a word $w$ such that $w^{12}\ne 1$ in $G_1$ and add this word in $R$ to start with a new group $G$. The previously found $G_0$ is still a quotient of $G$, but $G_1$ no longer is, because of the new relator $w^{12}$ for $G$. A more formalized version of this algorithm written in a meta-language is given below.

\medskip

{\small

\begin{itemize}
\item[]\texttt{Input:}
\item[]\quad\verb|l := 10;|$\,//$ Maximal length in the alphabet $\{s,t\}$ of elements in $L$
\item[]\quad\verb|L := [s, s*t, s^2*t, ... ];|$\,//$ Candidates for relators of length at most $l$
\item[]\quad\verb|d0 := 6;|$\,//$ Known order of a (2,3;12)-quotient $G_0$ of $G$
\item[]\quad\verb|R:=[];|$\,//$ Found relators
\item[]\texttt{Multiplier:}
\item[]\quad\verb|p:=2 or 3;  e:=1;|$\,//$ Searching for a new quotient of $G_0$ of order $d_0*p^e$
\item[]\quad\verb|max_e:=10; max_t:=100 h; max_m:=16 G;|$\,//$ Maximal exponent $e$, time, memory
\item[]\texttt{Start:}
\item[]\quad\verb"G := < a,b | 1 = a^2 = b^3 = w^12, w in R >;"
\item[]\texttt{Quotient:}
\item[]\quad\verb|d1 := d0 * p^e;|$\,//$ New order of a quotient to search for
\item[]\quad\verb|G1 := SolvableQuotient(G, d1);|$\,//$ Invoking ``Solvable Quotient'' routine
\item[]\quad\verb|If time > max_t Or memory > max_m  Then -> Output;|
\item[]\quad\verb|If G1 = fail Then { e := e+1; If  e > max_e Then -> Output;|
\item[]\quad\verb|                              Else -> Quotient };|
\item[]\quad\verb|If period of G1 is 12  Then { G0 := G1; d0 := d1; e := 1;|
\item[]\quad\verb|                                             -> Quotient }|
\item[]\quad\verb|Else {|
\item[]\texttt{Search:}
\item[]\quad\verb|find w in L : w(a,b)^12 <> 1 in G1;|
\item[]\quad\verb|If found Then { add w to R; e := 1; -> Start }|
\item[]\quad\verb|Else { l:=l+1;  add words of length l to L; -> Search } };|
\item[]\texttt{Output:}
\item[]\quad\verb|return G0;|
\end{itemize}

}

\medskip

When the search stops (which happens either if the maximal exponent \texttt{max\_e} is exceeded or due to memory/time reasons), we may continue, if necessary, from section \texttt{Multiplier} switching the value of $p$ to $3$, or back to $2$.

This algorithm, despite its limitations, allowed us to construct a large $(2,3;\, 12)$-group. The calculations in \textsf{GAP} yielded a group $G_0$ of order $2^{24}\cdot 3^7$ but exceeded maximal time \texttt{max\_t} when trying to find a quotient of order $2\cdot|G_0|$. The calculations in \textsf{Magma} yielded a group $G_0$ of order $2^{66}\cdot 3^7$
and found no larger quotient of order up to  $2^{10}\cdot|G_0|$, but exceeded maximal memory \texttt{max\_m}
searching for a quotient of order $3\cdot|G_0|$. We assume henceforth that $G_0$ is the latter group of order $2^{66}\cdot 3^7$.

Observe that the set $R$ of found relators consists of the following $21$ words. 
\begin{equation}\label{rels}
\begin{array}{r}
\{\ s,\,st,\,s^2t^2,\,s^4t,\,s^3t,\,s^3t^2,\,s^4t^2,\,  (st)^2s,\,s^5t^2,\,s^3(st)^2,\,s^4(st)^2t,\,s^3(st)^3,\,s^5t^2st,\\[5pt]
s(st)^2t,\,s^4t^4,\, s^2(st)^2t,\,s^2(st^2)^2,\,s^2(st)^2t^2st,\,s^3t^2st,\,s^2(st)^2t^2,\,  s^2(st)^3t \ \},
\end{array}
\end{equation}
where $s=ab$ and $t=ab^2$.

\section{Maximality}

We now address the problem of proving the maximality of the above $(2,3;\, 12)$-group $G_0$ of order $2^{66}\cdot 3^7$. Suppose that there is a larger finite $(2,3;\, 12)$-group $E$. Due to the uniqueness mentioned before Theorem \ref{main}, we may assume that $E$ is a $2,3$-{\it extension}
\begin{equation}\label{ext}
1\to V \to E \stackrel{\pi}{\to} G_0 \to  1,
\end{equation}
which means that the $2,3$-generating pair of $E$ is mapped by $\pi$ to the chosen $2,3$-generating pair of $G_0$.
Moreover, we may assume that $V$ is an elementary abelian $p$-group (with $p=2,3$) and is irreducible as an $\mathbb{F}_pG_0$-module in a natural way. We want to reduce the possibilities for $V$.

Let $p$ a prime. A group $X$ has $p$-{\em period} $p^l$ if every $p$-element of $X$ has order dividing $p^l$. An {\em invariant section} of $X$ is a section of a normal series for $X$. The order of $x\in X$ is denoted by $|x|$.

\begin{lem} \label{redv} Let $H$ be a periodic group of $p$-period $p^l$ and let $V$ be a $p$-elementary abelian invariant section of $H$ viewed as an $\mathbb{F}_pH$-module. Then, for every $h\in H$ of order $p^l$, the element
$$
h_0=1+h+h^2+\ldots+h^{|h|-1}
$$
effects a zero linear transformation of $V$.
\end{lem}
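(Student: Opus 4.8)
The plan is to reduce to showing that $h_0\cdot v=0$ for an arbitrary single $v\in V$; since $v$ is arbitrary this gives the claim. Write $V=N/M$, where $M\leqslant N$ are two consecutive terms of a normal series of $H$; then $M$ and $N$ are both normal in $H$, and the $\mathbb{F}_pH$-module structure on $V$ is the one induced by conjugation in $H$. Passing to $\bar H=H/M$, put $\bar h=hM$ and view $v$ inside the normal elementary abelian subgroup $\bar N=N/M$. Since $\bar h$ has finite order dividing $|h|=p^l$, the subgroup
$$
U=\langle\, \bar h^{-j} v \bar h^{\,j} : j \in \mathbb{Z} \,\rangle \leqslant \bar N
$$
is finitely generated, hence finite, elementary abelian, and normalized by $\bar h$ (because $\bar h^{\,p^l}=1$ permutes the displayed generators). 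Therefore $Q:=U\langle\bar h\rangle$ is a finite $p$-group.

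The first key ingredient is that every section of $H$ has $p$-period dividing $p^l$. For a quotient $H/K$, take a $p$-element $\bar g$ of order $p^m$, lift it to $g\in H$, and write $|g|=p^a q$ with $(p,q)=1$, so that $g^q$ is a $p$-element of $H$ and hence $a\leqslant l$; since $\overline{g^q}=\bar g^{\,q}$ has order dividing $p^a$ while $\bar g^{\,q}$ still has order $p^m$ (as $(p,q)=1$), we get $m\leqslant a\leqslant l$. For a subgroup the statement is clear. In particular the finite $p$-group $Q$, being a subgroup of the quotient $\bar H$, satisfies $\exp(Q)\mid p^l$.

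The second ingredient is the elementary identity, valid in any group with a normal abelian subgroup containing $v$:
$$
(\bar h v)^{k} = \bar h^{\,k}\,\prod_{j=0}^{k-1} \bar h^{-j} v \bar h^{\,j},
$$
the product being computed in the abelian group $U$. Taking $k=p^l$ and using $\bar h^{\,p^l}=1$ together with $(\bar h v)^{p^l}=1$ (valid since $\bar h v\in Q$ and $\exp(Q)\mid p^l$), we obtain $\prod_{j=0}^{p^l-1}\bar h^{-j} v \bar h^{\,j}=1$. Rewriting this product in additive module notation — and using $\bar h^{\,p^l}=1$ to note that the exponents $0,-1,\dots,-(p^l-1)$ exhaust all residues modulo $p^l$, so the resulting coefficient sum is $1+\bar h+\dots+\bar h^{\,p^l-1}$ regardless of the sign convention chosen for the conjugation action — this is precisely the statement $h_0\cdot v=0$. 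Since $v$ was arbitrary, $h_0$ induces the zero transformation of $V$.

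I expect the main obstacle to be the passage to the quotient $\bar H=H/M$: a $p$-element of $H/M$ need not lift to a $p$-element of $H$, so the control on the $p$-period of a section has to be obtained from periodicity of $H$ by extracting the $p$-part of a finite-order preimage. Everything after that — the finiteness and exponent bound for the auxiliary $p$-group $Q$, the commutator identity, and the index-set bookkeeping — is routine.
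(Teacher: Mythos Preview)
Your argument is correct. Both proofs rest on the same identity $(hv)^{n}=h^{n}\cdot\prod_{j}v^{h^{j}}$, but the packaging differs. The paper argues by contradiction inside $H$ itself: writing $V=K/N$, it first disposes of the case $\langle h\rangle\cap K\neq 1$ (where $h^{p^{l-1}}$ centralises $V$ and $h_0=p\cdot(1+h+\dots+h^{p^{l-1}-1})=0$ automatically), and in the remaining case lifts $\bar v$ to $v\in K$, computes $v_0=(hv)^{|h|}\in K$, and observes that $\overline{v_0}=\bar v h_0\neq 0$ forces $p\mid|v_0|$, whence $|hv|=|h|\cdot|v_0|$ is divisible by $p^{l+1}$, contradicting the $p$-period hypothesis. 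You instead pass to $\bar H=H/M$, build the finite $p$-subgroup $Q=U\langle\bar h\rangle$, and use that quotients of a periodic group inherit the $p$-period bound to get $\exp(Q)\mid p^l$; then $(\bar h v)^{p^l}=1$ yields the vanishing directly. Your route avoids the $\langle h\rangle\cap K$ case split at the cost of the small lemma on $p$-period in quotients (which is where periodicity of $H$ enters for you; in the paper it enters when asserting $|v_0|<\infty$). Both are short; yours is arguably a little more transparent, while the paper's stays entirely inside $H$.
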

\begin{proof} Let $V=K/N$ for suitable $K,N\trianglelefteqslant H$. Assume to the contrary that $\overline{v}h_0\ne 0$ on for some $\overline{v}\in V$. Observe that $\langle h \rangle\cap K=1$. Indeed, otherwise, the element $h^{p^{l-1}}\in K$ induces the identity transformation of $V$ and $h_0=p\cdot h_1=0$, where $h_1=1+h+\ldots+h^{p^{l-1}-1}$,
contrary to the assumption. Now, let $v\in K$ be a preimage of $\overline{v}$. Since $\langle h \rangle\cap K=1$, we see that $|hv|=|h|\cdot|v_0|$, where
$$
v_0=(hv)^{|h|}=v^{|h|-1}\cdot v^{|h|-2}\cdot \ldots \cdot v.
$$
Clearly, the image of $v_0$ in $V$ is $\overline{v}h_0\ne 0$, which implies that $|v_0|$ is a multiple of $p$. Therefore, $|hv|$ is a multiple of $p^{l+1}$, a contradiction.
\end{proof}

Getting back to the extension (\ref{ext}) of $G_0$, we have the following restriction on the $\mathbb{F}_pG_0$-module $V$.

\begin{cor} $(i)$ In case $p=3$, the action of all elements of $G_0$ of order $3$ on $V$ is {\em quadratic}, i.e. annihilated by the polynomial $x^2+x+1$.

$(ii)$ In case $p=2$, the action of all elements of $G_0$ of order $4$ on $V$ is {\em cubic}, i.e. annihilated by the polynomial $x^3+x^2+x+1$.
\end{cor}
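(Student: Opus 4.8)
The plan is to obtain both statements as immediate consequences of Lemma \ref{redv}, applied to the hypothetical larger group $E$ of the extension (\ref{ext}) rather than to $G_0$ itself. The first point is that $E$, being a $(2,3;12)$-group, is periodic of period $12$; since $12=2^2\cdot3$, every $3$-element of $E$ has order dividing $3$ and every $2$-element has order dividing $4$, so $E$ has $3$-period $3$ and $2$-period $4$. The second point is that, because $V$ is abelian, the conjugation action of $E$ on $V$ is trivial on $V$ itself and hence factors through $E/V\cong G_0$; thus $V$, regarded as the invariant section $V/1$ of $E$, is an $\mathbb{F}_pE$-module whose underlying $\mathbb{F}_pG_0$-module is exactly the one in the statement. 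So it suffices to control, for each relevant $\bar g\in G_0$, the operator $1+\bar g+\dots$ on $V$, and this is what Lemma \ref{redv} delivers once we produce a preimage of $\bar g$ in $E$ of the correct order.

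For part $(i)$ I would take any $\bar g\in G_0$ with $|\bar g|=3$ and lift it to some $g\in E$. Then $g^3\in V$, and since $V$ is elementary abelian of exponent $3$ we get $g^9=1$, so $|g|\in\{1,3,9\}$; the value $9$ is excluded because $9\nmid12$ and $E$ has period $12$, and $1$ is excluded since $\bar g\ne1$, so $|g|=3$. Lemma \ref{redv} with $H=E$, $p=3$, $l=1$, $h=g$ then gives that $1+g+g^2$ annihilates $V$, and rewriting the $g$-action as the $G_0$-action of $\bar g$ yields $1+\bar g+\bar g^2=0$ on $V$, i.e. $\bar g$ acts quadratically. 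Part $(ii)$ is the same argument with $p=2$: a lift $g\in E$ of an element $\bar g\in G_0$ of order $4$ satisfies $g^4\in V$, hence $g^8=1$ as $V$ has exponent $2$, so $|g|\mid8$; since also $|g|\mid12$ (equivalently, since $E$ has no element of order $8$) and $4=|\bar g|$ divides $|g|$, we get $|g|=4$, and Lemma \ref{redv} with $l=2$ gives $1+g+g^2+g^3=0$ on $V$, i.e. $\bar g$ acts cubically.

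I do not anticipate a genuine obstacle here: the whole content is the remark that the hypotheses of Lemma \ref{redv} are met by $E$ (which is the group known to be periodic of period $12$) even though $V$ is only a module for $G_0$. The one place deserving a moment's care is the order bookkeeping for the lift $g$, since Lemma \ref{redv} concerns elements of order exactly $p^l$: one must combine the exponent of $V$, the period of $E$, and the order of the image $\bar g$ to conclude $|g|=p^l$ and not merely a proper divisor or a larger multiple of it.
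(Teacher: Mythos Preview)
Your argument is correct and is precisely the deduction the paper intends: the Corollary is stated without proof immediately after Lemma~\ref{redv}, and your application of that lemma to $H=E$ with the invariant section $V=V/1$, together with the order bookkeeping for a lift of $\bar g$, is exactly what is required.
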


Let $p=2$ and let $\mathcal{X}$ be a representation of $G_0$ corresponding to the module $V$. Then $\mathrm{O}_2(G_0)\leqslant \mathrm{Ker}\mathcal{X}$ and so $V$ is naturally a $G/\mathrm{O}_2(G_0)$-module.
The the quotient $G/\mathrm{O}_2(G_0)$ has small enough order, $2^4\cdot 3^7$, to make it possible to find explicitly all irreducible modules over $\mathbb{F}_2$ and check which of them have cubic action of element of order $4$.
There are five such modules $V_i^{(2)}$, $i=1,\ldots,5$ listed in the first part of Table~\ref{modg0}.

\begin{table} [htb]
\caption{ $\mathbb{F}_pG_0$-modules with quadratic and cubic action \label{modg0} }
\begin{center}
\begin{tabular}{c|ccccc|ccc}
\hline
$p$ &\multicolumn{5}{c|}{$2$} & \multicolumn{3}{c}{$3$} \\[2pt]
\hline
$V$ & $V_1^{(2)}$ & $V_2^{(2)^{\vphantom{A^A}}}$ & $V_3^{(2)}$ & $V_4^{(2)}$ & $V_5^{(2)}$ & $V_1^{(3)}$ & $V_2^{(3)}$ & $V_3^{(3)}$ \\
$\mathrm{dim}\,V$ & $1$ & $2$ & $2$ & $4$ & $6$ & $1$ & $1$ & $4$ \\
absol. irred. & $+$ & $-$ & $+$ & $-$ & $+$ & princ. & $+$ & $-$ \\
$\mathrm{dim}\,H^2(G_0,V)$ & $14$ & $24$ & $12$ & $22$ & $34$ & $3$ & $4$ & $6$ \\
\hline
\end{tabular}
\end{center}
\end{table}

In the case $p=3$, we cannot hope to find all irreducibles for  $G/\mathrm{O}_3(G_0)$ because of its sheer order $2^{66}\cdot 3^3$ and so need another strategy. For $K=\mathbb{F}_3$ and $B=B(2,3;\,12)$, define
\begin{equation}\label{wdef}
W=KB/(x^8+x^4+1\mid x\in B).
\end{equation}
The polynomial $x^8+x^4+1$ here ``encodes'' the quadratic action. Namely, if $|x|$ divides $4$ then $x^8+x^4+1=0$, otherwise, $y=x^4$ has order $3$ and $x^8+x^4+1=y^2+y+1$. Thus,
we may view $W$ as the free cyclic $KB$-module with quadratic action of the elements of $B$ of order $3$. An analog of the following lemma was proved by A.\,S. Mamontov without computer help. 

\begin{lem} \label{lm} The $KB$-module $W$ is finite-dimensional. Namely, $\mathrm{dim}\,W=16$.
\end{lem}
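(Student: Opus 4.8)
The plan is to bound $\dim W$ from above by a short computation with the presentation, and then to verify that imposing the remaining defining relations of $W$ does not lower the dimension. Put $s=ab$. Since $b^{3}=1$ in $B$, the relator $x^{8}+x^{4}+1$ of \eqref{wdef} becomes $b^{2}+b+1=0$ in $W$ at $x=b$, and $s^{8}+s^{4}+1=0$ in $W$ at $x=ab$. Hence $W$ is a homomorphic image of the finitely presented $K$-algebra
$$
\widetilde W=K\langle a,b\rangle\,/\,\bigl(a^{2}-1,\ b^{2}+b+1,\ s^{8}+s^{4}+1\bigr),\qquad s=ab,
$$
and it suffices to prove both $\dim\widetilde W\le16$ and that the epimorphism $\widetilde W\to W$ is an isomorphism.

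For the bound $\dim\widetilde W\le16$: in $\widetilde W$ the relation $s^{8}=-s^{4}-1$ shows that $1,s,\dots,s^{7}$ span $K[s]$ and that $s$ is a unit with $s^{-1}=-s^{7}-s^{3}$. Writing $b=as$, the relation $b^{2}+b+1=0$ reads $asas+as+1=0$; multiplying it on the left by $a$ yields $sas=-s-a$, whence
$$
sa=(-s-a)s^{-1}=-1+as^{3}+as^{7}.
$$
Let $V=\sum_{i=0}^{7}\bigl(Ks^{i}+Kas^{i}\bigr)$, a subspace spanned by $16$ elements. Using the last identity, together with $s^{8}=-s^{4}-1$ to reduce exponents, one checks at once that $V$ is closed under left multiplication by $a$ and by $s$; since $1\in V$, the subspace $V$ is a left ideal of $\widetilde W$ containing the identity, so $V=\widetilde W$ and $\dim\widetilde W\le16$.

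Now $\widetilde W$ is finite-dimensional, so the subgroup $\overline B$ of $\widetilde W^{\times}$ generated by $a$ and $b$ is finite, and I would compute it explicitly with computer algebra. Two finite checks then complete the argument: (i) $\overline B$ has period $12$, whence $K\langle a,b\rangle\to\widetilde W$ factors through $KB$ and $\widetilde W$ is an image of $KB$; and (ii) $\overline{u}^{8}+\overline{u}^{4}+1=0$ in $\widetilde W$ for every $\overline{u}\in\overline B$, whence $\widetilde W$ is an image of $W=KB/(x^{8}+x^{4}+1\mid x\in B)$. The two epimorphisms force $W\cong\widetilde W$, and the explicit multiplication table of $\widetilde W$ shows the $16$ monomials $s^{i},\,as^{i}$ $(0\le i\le7)$ to be linearly independent, giving $\dim W=16$. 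Alternatively, the lower bound $\dim W\ge16$ could be obtained by exhibiting one explicit $16$-dimensional cyclic $KB$-module on which every element of order $3$ acts quadratically.

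The upper bound is elementary — the displayed reduction plus the left-ideal argument — and the delicate point is the lower bound: one must exclude the possibility that the relations $\overline{u}^{8}+\overline{u}^{4}+1=0$ for $u$ other than $b$ and $ab$ collapse $\widetilde W$ below dimension $16$. This is where computer help is genuinely needed: although $\overline B$ is a finite group, ruling this out by hand would amount to controlling all period-$12$ identities in two generators, whereas in the analogous situation handled by Mamontov the group involved is small enough to be treated directly.
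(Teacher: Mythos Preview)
Your $\widetilde W$ is exactly the finitely presented algebra $A$ of the paper, and the overall strategy coincides: obtain surjections in both directions between $W$ and $\widetilde W$ by verifying (with computer help) that the group $\overline B=\langle a,b\rangle\le\widetilde W^{\times}$ has exponent $12$ and that each of its elements satisfies $u^{8}+u^{4}+1=0$; the paper records $|\overline B|=432$. The one genuine difference is how the dimension of $\widetilde W$ is established. The paper gets $\dim A=16$ in a single \textsf{Magma} \texttt{FPAlgebra} call, whereas you give a hand proof of the upper bound $\dim\widetilde W\le16$ via the rewriting rule $sa=-1+as^{3}+as^{7}$, reducing every word to a $K$-combination of the sixteen monomials $s^{i},\,as^{i}$ ($0\le i\le7$). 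This has the virtue of isolating the computer dependence to the lower bound and the group-theoretic checks on $\overline B$, at the minor cost that you still need a separate confirmation (by machine, or by exhibiting an explicit $16$-dimensional cyclic $KB$-module with quadratic action, as you note) that those sixteen monomials are independent---something the paper's single \texttt{FPAlgebra} invocation handles in one stroke.
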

\begin{proof} Define the $2$-generated associative $K$-algebra with $1$
$$
A=\langle\ m,n\ \mid\ 0=m^2-1=n^2+n+1=(mn)^8+(mn)^4+1\ \rangle.
$$
Clearly, as a $K$-algebra, $W$ is a homomorphic image of $A$ under the homomorphism that extends the map $\varphi: (m,n)\mapsto(\overline{a},\overline{b})$, where $(\overline{a},\overline{b})$ is the image in $W$
of the generating 2,3-pair $(a,b)$ of $B$. We prove that $\varphi$ is in fact an isomorphism. Using the \textsf{Magma} method ``FPAlgebra'' for constructing finitely presented algebras it is readily verified that $\mathrm{dim}\,A=16$. Now, the elements $m,n\in A$ are invertible and the group $A_0=\langle m,n\rangle$ can be checked to have order $432$, exponent $12$, and quadratic action on $A$ of the elements of order $3$. This implies that $A_0$ is a homomorphic image of $B$ and $A$ is a (cyclic) $KB$-module with quadratic action. Since $W$ is a free $KB$-module with these properties, there exists a homomorphism $W\to A$ which is clearly an inverse of~$\varphi$.
\end{proof}

As every irreducible module is cyclic, we conclude that $V$ is an irreducible homomorphic image of the free module $W$. It can be checked that up to isomorphism $W$ has only three irreducible factors $V_i^{(3)}$, $i=1,2,3$, which are listed in the second part of Table \ref{modg0}.

We remark that there is no known analog of Lemma \ref{lm} that would help to find the modules with cubic action in characteristic 2.

\section{2,3-extensions}

Now that we have restricted the possibilities for $V$ in (\ref{ext}), we need to check, for every possible extension $E$ with a given $V$, if $E$ is $2,3$-generated of period $12$. Since $E$ can be constructed as a polycyclic group and {\textsf{Magma}} is efficient in calculating the exponent of a polycyclic group, the verification of whether $E$ has period $12$ presents no problem (in fact, if $E$ is a {\em split} extension, it will automatically have period $12$ due to the restrictions on $V$). Therefore, we will concentrate on finding sufficient conditions for $E$ to be a $2,3$-extension of $G_0$.

\begin{lem} \label{cri} Let the extension $(\ref{ext})$ be nonsplit, where $G_0=\langle a, b \rangle$ and $V$ is irreducible. Then $E$ is a $|a|,|b|$-extension if and only if there are preimages under $\pi$ of $a$ and $b$  of orders $|a|$ and $|b|$, respectively.
\end{lem}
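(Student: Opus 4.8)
The plan is to establish the nontrivial ``if'' direction; the ``only if'' direction holds by the very definition of an $|a|,|b|$-extension, which by construction carries a generating pair of the prescribed orders lifting $a,b$. So I would assume we are given preimages $\tilde a,\tilde b\in E$ of $a,b$ under $\pi$ with $|\tilde a|=|a|$ and $|\tilde b|=|b|$, and put $E_0=\langle\tilde a,\tilde b\rangle\leqslant E$. Then $\pi(E_0)=\langle a,b\rangle=G_0$, so $E_0V=E$, and the whole problem reduces to showing $V\leqslant E_0$, i.e.\ $E_0=E$.

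The key step is to examine $D=E_0\cap V$ and argue that it is a $G_0$-submodule of $V$. Since $V=\operatorname{Ker}\pi\trianglelefteqslant E$, the subgroup $D$ is normal in $E_0$; and $D\leqslant V$ with $V$ abelian, so $V$ normalizes $D$ as well; hence $D$ is normalized by $\langle E_0,V\rangle=E_0V=E$. As $V$ is abelian, the conjugation action of $E$ on $V$ factors through $\pi$, so the $\mathbb{F}_pE$-submodules of $V$ are exactly the $\mathbb{F}_pG_0$-submodules; thus $D$ is an $\mathbb{F}_pG_0$-submodule, and irreducibility of $V$ forces $D=V$ or $D=1$.

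Finally I would dispose of the case $D=1$: there $\pi$ restricts to an isomorphism $E_0\xrightarrow{\ \sim\ }E_0V/V=E/V=G_0$, whose inverse is a homomorphic section $G_0\to E_0\hookrightarrow E$ of $\pi$, contradicting the hypothesis that $(\ref{ext})$ is nonsplit. Hence $D=V$, so $V\leqslant E_0$ and $E=E_0V=E_0=\langle\tilde a,\tilde b\rangle$, i.e.\ $E$ is an $|a|,|b|$-extension. I do not expect a genuine obstacle here; the only point needing care is the verification that $E_0\cap V$ is $G_0$-invariant, which is precisely where irreducibility of $V$ (rather than just normality of $V$ in $E$) enters. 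The practical value of the lemma, worth emphasizing, is that for a nonsplit extension the condition ``$a$ and $b$ have preimages of the correct orders'' is a finite and easily mechanized test, far cheaper than checking directly that two chosen lifts generate $E$.
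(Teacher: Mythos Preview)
Your proof is correct and follows essentially the same route as the paper's: form $E_0=\langle\tilde a,\tilde b\rangle$, note $E_0V=E$, and use irreducibility of $V$ together with nonsplitness to force $E_0\cap V=V$. You supply the extra justification (that $E_0\cap V$ is $E$-invariant and hence a $G_0$-submodule) which the paper's proof leaves implicit.
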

\begin{proof} Let $\hat{a}$, $\hat{b}$ be such preimages and let $\hat{E}=\langle \hat{a},\hat{b}\rangle$. We have $E=\hat{E}V$ and $\hat{E}\cap V\ne 0$, since $E$ is nonsplit. But $V$ is irreducible, which yields $\hat{E}\cap V=V$
and so $\hat{E}=E$. The converse holds by definition.
\end{proof}

There are different ways to check whether $\pi^{-1}(a)$ contains an element of order $|a|$. Since $|\pi^{-1}(a)|=|V|$, for small modules $V$, one could use exhaustive search through all preimages. However, the following is a more conceptual approach which works for larger modules, too.

Let $\tau :G_0\to E$ be a transversal in (\ref{ext}), i.\,e., $\pi\circ \tau=\mathrm{id}_{G_0}$. If we choose $\tau$ such that $\tau(1)=1$ then the corresponding $2$-cocycle $\gamma: G_0\times G_0\to V$, which is defined by
\begin{equation}\label{coc}
\gamma(g_1,g_2)=\tau(g_1g_2)^{-1}\tau(g_1)\tau(g_2),\quad g_1,g_2\in G_0,
\end{equation}
will be {\em normalized}, i.\,e. $\gamma(1,1)=0$. (We use additive notation in $V$.) The set $Z_N^2(G_0,V)$ of all normalized $2$-cocycles is a subspace in $Z^2(G_0,V)$. We also define $B_N^2(G_0,V)=Z_N^2(G_0,V)\cap B^2(G_0,V)$ to be
the set of normalized $2$-coboundaries.

\begin{lem} \label{prei} Let $G_0$ be a finite group, let $V$ be a $KG_0$-module over a field $K$, and let an extension $(\ref{ext})$ be defined by a $2$-cocycle $\gamma\in Z_N^2(G_0,V)$. For $g\in G_0$, define
\begin{align*}
\psi_g(\gamma)&=\gamma(g,g)+\gamma(g,g^2)+\ldots+\gamma(g,g^{|g|-1}),\\
g_0&=1+g+\ldots+g^{|g|-1}.
\end{align*}
Then
\begin{enumerate}
\item[$(i)$] $\exists\  \hat{g}\in\pi^{-1}(g)\, :\  |\hat{g}|=|g|\quad \Leftrightarrow \quad \psi_g(\gamma)\in\mathrm{Im}(g_0)$.
\item[$(ii)$] $\forall\  \hat{g}\in\pi^{-1}(g)\quad |\hat{g}|=|g|\quad \Leftrightarrow \quad \psi_g(\gamma)=0\ $ and $\ \mathrm{Im}(g_0)=0$.

\item[$(iii)$] If $\ \mathrm{Im}(g_0)=0$ then $B_N^2(G_0,V)\subseteq \mathrm{Ker}\, \psi_g$. In particular, $\psi_g$ induces a $K$-linear map  $\overline{\psi}_g:H^2(G_0,V)\to V$.

\end{enumerate}
\end{lem}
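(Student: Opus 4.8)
The plan is to work directly from the definition of the extension $E$ associated with a normalized $2$-cocycle $\gamma$. Recall that as a set $E = G_0 \times V$ with multiplication $(g_1,v_1)(g_2,v_2) = (g_1g_2,\ v_1 g_2 + v_2 + \gamma(g_1,g_2))$, where the $G_0$-action on $V$ is written on the right, and $\pi(g,v) = g$. A typical preimage of $g$ is $\hat g = (g,v)$ with $v \in V$ arbitrary. The first step is to compute $\hat g^{\,k}$ for $1 \le k \le |g|$ by induction: one gets $\hat g^{\,k} = \big(g^k,\ \sum_{i=0}^{k-1} v\, g^i + \sum_{j=1}^{k-1}\gamma(g,g^j)\big)$. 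Taking $k = |g| =: m$ and using $g^m = 1$, we find $\hat g^{\,m} = (1,\ v\, g_0 + \psi_g(\gamma))$, since $\sum_{i=0}^{m-1} v g^i = v\,g_0$ and $\sum_{j=1}^{m-1}\gamma(g,g^j) = \psi_g(\gamma)$. (Here the normalization $\gamma(1,1)=0$ is what guarantees $\hat g^{\,m}$ lands in $1 \times V$ cleanly with no extra $\gamma(g^{m-1},g)$-type correction; this should be checked carefully, as it is the one place the hypothesis $\gamma \in Z_N^2$ is used.) Consequently $|\hat g|$ divides $|g|$ — hence equals $|g|$ — precisely when $\hat g^{\,m} = (1,0)$, i.e. when $v\,g_0 + \psi_g(\gamma) = 0$, that is $\psi_g(\gamma) = -v\,g_0 \in \mathrm{Im}(g_0)$.

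Part $(i)$ is then immediate: such a $v$ exists iff $\psi_g(\gamma) \in \mathrm{Im}(g_0)$. Part $(ii)$ follows by noting that the condition "$v g_0 + \psi_g(\gamma) = 0$ for all $v \in V$" holds iff $\psi_g(\gamma) = 0$ (take $v = 0$) and $v\, g_0 = 0$ for all $v$, i.e. $\mathrm{Im}(g_0) = 0$.

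For part $(iii)$, suppose $\mathrm{Im}(g_0) = 0$. A normalized coboundary has the form $\gamma(g_1,g_2) = f(g_1) g_2 - f(g_1 g_2) + f(g_2)$ for some function $f \colon G_0 \to V$ with $f(1) = 0$. Substituting into the definition of $\psi_g$ gives $\psi_g(\gamma) = \sum_{j=1}^{m-1}\big(f(g) g^j - f(g^{j+1}) + f(g^j)\big)$. The middle terms form the telescoping-type sum $\sum_{j=1}^{m-1}\big(f(g^j) - f(g^{j+1})\big) = f(g) - f(g^m) = f(g) - f(1) = f(g)$, while the first terms give $f(g)\sum_{j=1}^{m-1} g^j = f(g)(g_0 - 1) = -f(g)$ using $g_0 = 0$. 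Adding, $\psi_g(\gamma) = f(g) - f(g) = 0$, so $B_N^2(G_0,V) \subseteq \mathrm{Ker}\,\psi_g$. Since every $2$-cocycle is cohomologous to a normalized one and $\psi_g$ is visibly $K$-linear in $\gamma$, it descends to a well-defined $K$-linear map $\overline\psi_g \colon H^2(G_0,V) \to V$ on the quotient $Z_N^2/B_N^2 \cong H^2(G_0,V)$.

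The only genuinely delicate point is the bookkeeping in the power formula for $\hat g^{\,k}$, specifically making sure the index ranges on the $\gamma$-sum and the action-sum are exactly $\{1,\dots,m-1\}$ and $\{0,\dots,m-1\}$ and that no boundary term is dropped when passing from $k=m-1$ to $k=m$; this is where $\gamma(1,1)=0$ enters and where an off-by-one slip would be easy. Everything else is routine linear algebra and telescoping.
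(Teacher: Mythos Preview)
Your proof is correct and follows essentially the same route as the paper: both compute the $|g|$-th power of a general preimage of $g$, obtain $\hat g^{\,|g|}=v\,g_0+\psi_g(\gamma)$ (the paper phrases this via a transversal $\tau$ rather than the explicit $G_0\times V$ model, but the computation is identical), deduce $(i)$ and $(ii)$ from this formula, and handle $(iii)$ by substituting the coboundary formula into $\psi_g$ and telescoping to $-f(g)\,g_0=0$. The only cosmetic difference is your sign convention for coboundaries; your cautionary note about where normalization enters is slightly off (it is needed so that $(1,0)$ is the identity of $E$, not for the index bookkeeping in the power formula itself), but this does not affect the argument.
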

\begin{proof} Let $\tau :G_0\to E$ be a transversal in (\ref{ext}) such that $\tau(1)=1$. Then $\pi^{-1}(g)=\{\tau(g)v\mid v\in V\}$. We see that $(\tau(g)v)^{|g|}=\tau(g)^{|g|}+vg_0$ is the zero of $V$ if and only if $\tau(g)^{|g|}=-vg_0$. An inductive application of (\ref{coc}) yields $\tau(g)^{|g|}=\tau(1)\psi_g(\gamma)=\psi_g(\gamma)$. These remarks imply $(i)$ and $(ii)$.

We now prove $(iii)$. Let $\gamma_f\in B_N^2(G_0,V)$, where $f:G\to V$ satisfies $f(1)=0$. This means that $\gamma_f(g_1,g_2)=f(g_1g_2)-f(g_1)\cdot g_2-f(g_2)$ for all $g_1,g_2\in G$. We have
$$
\begin{array}{rl}
\gamma_f(g,g)&=f(g^2)-f(g)\cdot g-f(g),\\[3pt]
\gamma_f(g,g^2)&=f(g^3)-f(g)\cdot g^2-f(g^2),\\
&\cdots\\
\gamma_f(g,g^{|g|-1})&=f(1)-f(g)\cdot g^{|g|-1}-f(g^{|g|-1}).\\[3pt]
\end{array}
$$
Summing up gives $\psi_g(\gamma_f)=-f(g)\cdot g_0 = 0$, because $\mathrm{Im}(g_0)=0$. The claim follows.
\end{proof}

The meaning of Lemmas \ref{cri} and \ref{prei}$(i)$   is that they reduce checking the $2,3$-generation of nonsplit extensions to a linear calculation in the module $V$. Here is an analogous result for split extensions.

\begin{lem} \label{spl} Let $G_0=\langle a,b\rangle$ be a finite group and let $V$ be an irreducible finite-dimensional $KG_0$-module over a field $K$. Denote by $E$ the natural semidirect product of $G_0$ and $V$, and set $a_0=1+a+\ldots a^{|a|-1}$, $b_0=1+b+\ldots b^{|b|-1}$.  Then $E$ is an $|a|,|b|$-extension of $G_0$ if and only if
$$
\mathrm{dim}\,\mathrm{Ker}\,a_0 + \mathrm{dim}\,\mathrm{Ker}\, b_0 -\mathrm{dim}\,V > \mathrm{dim}\,H^1(G_0,V) - \mathrm{dim}\,H^0(G_0,V).
$$
\end{lem}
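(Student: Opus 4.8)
The plan is to count, in two different ways, the pairs $(\hat a,\hat b)\in\pi^{-1}(a)\times\pi^{-1}(b)$ for which $\langle\hat a,\hat b\rangle=E$, and to show this count is positive precisely under the stated inequality. First I would record the ``local'' information: by Lemma~\ref{prei}$(i)$ applied to the split extension (where the relevant cocycle is $0$, so $\psi_g=0$ automatically), the preimages of $a$ of order $|a|$ are exactly the elements $av$ with $va_0=0$, i.e. there are $|{\rm Ker}\,a_0|$ of them, sitting in the coset $aV$; likewise $|{\rm Ker}\,b_0|$ preimages of $b$ of order $|b|$. By the same reasoning as in Lemma~\ref{cri}, irreducibility of $V$ forces $\langle\hat a,\hat b\rangle$ to be either all of $E$ or a complement to $V$; so the generating pairs are exactly those $(\hat a,\hat b)$ of the correct orders that do \emph{not} lie in a common complement. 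Hence $E$ is an $|a|,|b|$-extension iff
$$
\bigl|{\rm Ker}\,a_0\bigr|\cdot\bigl|{\rm Ker}\,b_0\bigr|\;>\;\#\{(\hat a,\hat b)\text{ of correct orders lying in a common complement}\}.
$$

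The next step is to evaluate the right-hand side. The complements to $V$ in $E$ are parametrized, up to $V$-conjugacy, by $H^1(G_0,V)$, and the $V$-conjugates of a fixed complement form an orbit of size $|V|/|{\rm C}_V(G_0)|=|V|/|H^0(G_0,V)|$ (here $H^0=V^{G_0}$, the fixed points). Inside each complement there is exactly one preimage of $a$ (namely the image of $a$ under the corresponding splitting) and one preimage of $b$, and both automatically have the right orders because a complement is isomorphic to $G_0$. Conjugating the complement by $v\in V$ replaces this preimage-of-$a$ by $(av)^{\text{-conj}}$, which runs over a coset translate as $v$ varies; the upshot of a short bookkeeping argument is that the number of pairs $(\hat a,\hat b)$ of correct orders lying in a common complement equals $|H^1(G_0,V)|\cdot|V|/|H^0(G_0,V)|\cdot|{\rm C}_V(G_0)|$, the last two factors being the orbit length and the size of the pointwise stabilizer inside a single complement's worth of conjugates — in effect $|H^1|\cdot|V|^2/|H^0|^{?}$; the clean way to phrase it, which I would derive carefully, is that the count is $|H^1(G_0,V)|\cdot|V|$ divided by $|H^0(G_0,V)|$ and multiplied by $|H^0(G_0,V)|$, i.e.\ it simplifies to $|H^1(G_0,V)|\cdot|V|$ after the stabilizer and orbit factors cancel against each other. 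Substituting into the inequality above and taking logarithms base $|K|$ turns the multiplicative condition $|{\rm Ker}\,a_0|\cdot|{\rm Ker}\,b_0|>|H^1(G_0,V)|\cdot|V|$ into the additive statement
$$
\dim{\rm Ker}\,a_0+\dim{\rm Ker}\,b_0\;>\;\dim H^1(G_0,V)+\dim V,
$$
and then moving a $\dim H^0(G_0,V)$ across — which is where that term enters — gives exactly the asserted formula.

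I expect the main obstacle to be the bookkeeping in the counting step: correctly identifying how many of the ``common-complement'' pairs are distinct, since different complements can share a preimage of $a$ (any two complements agreeing on the cyclic subgroup $\langle\hat a\rangle$), so an inclusion–exclusion or orbit-counting argument over the $H^1$-torsor is needed rather than a naive product. The subtlety is that the generating pair $(\hat a,\hat b)$ lies in a common complement iff the ``difference cocycle'' it defines is a coboundary \emph{and} of the normalized form compatible with both chosen orders, and one must check that the fixed-point space $H^0(G_0,V)$ is exactly the redundancy that must be divided out — this is precisely the $-\dim H^0$ correction. Once that orbit count is pinned down, the rest is the elementary logarithmic rearrangement sketched above. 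The irreducibility hypothesis is used exactly once, but crucially: to guarantee the dichotomy ``$\langle\hat a,\hat b\rangle=E$ or $\langle\hat a,\hat b\rangle$ is a complement,'' with no intermediate subgroups to worry about.
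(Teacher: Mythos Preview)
Your overall strategy matches the paper's exactly: count the $|a|,|b|$-pairs above $(a,b)$, invoke irreducibility for the dichotomy ``generates $E$ or generates a complement,'' and compare against the number of bad pairs. But the bookkeeping step you flag as the ``main obstacle'' is where you go wrong, and the fix is much simpler than the inclusion--exclusion you anticipate.

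The point you are missing is that there is a clean \emph{bijection} between complements to $V$ in $E$ and bad pairs: each complement $G_1$ contains a unique pair $(\hat a,\hat b)$ above $(a,b)$ (as you note), and conversely each bad pair generates its complement $\langle\hat a,\hat b\rangle$, so two distinct complements cannot yield the same pair. Your worry that complements can share a preimage of $a$ is irrelevant --- they cannot share \emph{both} coordinates, since the pair determines the complement. Hence the number of bad pairs is simply the total number of complements, namely $|H^1(G_0,V)|\cdot|V|/|H^0(G_0,V)|$ (conjugacy classes times common $V$-orbit length). There is no extra stabilizer factor of $|H^0|$ to multiply back in; your count of $|H^1|\cdot|V|$ is off by exactly this factor, and the hand-wave ``moving a $\dim H^0$ across'' cannot repair the resulting inequality $\dim\ker a_0+\dim\ker b_0>\dim H^1+\dim V$, since it contains no $H^0$ term to move. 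The correct inequality $|\ker a_0|\cdot|\ker b_0|>|H^1|\cdot|V|/|H^0|$ gives the asserted dimension formula directly upon taking logarithms.
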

\begin{proof} For $v\in V$, we have $(av)^{|a|}=va_0$, which is the zero of $V$ if and only if $v\in \mathrm{Ker}\,a_0$, and similarly for $b$. Hence, the number of $|a|,|b|$-pairs of elements of $E$ that cover $(a,b)$  is $|\mathrm{Ker}\,a_0|\cdot|\mathrm{Ker}\,b_0|$. If $(a_1,b_1)$ is such a pair then $\langle a_1,b_1 \rangle V=E$ and, due to the irreducibility of $E$, we see that $\langle a_1,b_1 \rangle$ either equals $E$ or is a complement to $V$. Conversely, every complement $G_1$ uniquely determines an $|a|,|b|$-pairs above $(a,b)$. Hence, the number of generating $|a|,|b|$-pairs is the difference between the number of all pairs and the number of complements. Observe that the number of complements that are conjugate to a fixed one, $G_1$, equals
$$
|E:N_E(G_1)|=|V:C_V(G_1)|=|V:C_V(G_0)|=|V|/|H^0(G_0,V)|
$$
and does not depend on the complement. Also, it is well known that there are $|H^1(G_0,V)|$ conjugacy classes of complements to $V$ in $E$. Hence, there are a total of
$$|V|\cdot|H^1(G_0,V)|/|H^0(G_0,V)|$$
complements. Passing to dimensions, we obtain the required inequality.
\end{proof}

\section{The search}
The results in the previous section can be used to check the $2,3$-generation of
all extensions (\ref{ext}) with a given irreducible module $V$ by running through the elements of $H^2(G_0,V)$, unless the order $|H^2(G_0,V)|$ is too big. This was done for all modules $V$ from Table
\ref{modg0} except for $V_2^{(2)}$ and $V_5^{(2)}$, and no $2,3$-extensions of $G_0$ of period $12$ were found.
For the excluded two modules, this method would require searching through as many as $2^{24}$ and $2^{36}$ extensions, respectively. In these cases, we do the elimination in a different way, which we briefly explain.

Let $\mathcal{X}$ be the representation corresponding to one of the excluded modules $V$. Observe that the element $g_1 = ab\in G_0$ has order $12$. Hence, a necessary condition for $E$ to be of period $12$ is that all elements in $\pi^{-1}(g_1)$ have order $12$. Since $\mathcal{X}(g_0)=0$, where $g_0=1+g_1+\ldots+g_1^{|g_1|-1}$, Lemma \ref{prei}$(ii),(iii)$ implies that the extensions $E$ satisfying this condition are defined by the elements of $\mathrm{Ker}\,\overline{\psi}_{g_1}$. This kernel may turn out to be a proper subspace of $H^2(G_0,V)$ thus reducing the dimension of the space to search in. We may repeat this procedure by taking a new element $g_2\in G_0$ of order $12$ or $4$ which should be, in a sense, ``independent'' of $g_1$ and attempt
to reduce the dimension further. It turns out that the elements $g_i$ of $L$ in (\ref{lst}), which we used as candidates for relators, are also good candidates for such independent elements of $G_0$. As a result, we found that,
as $g_i$ runs through the first few ($\leqslant 20$) such elements of $L$,
the dimension of $\cap_{i}\mathrm{Ker}\,\overline{\psi}_{g_i}$ is $2$ for $V=V_2^{(2)}$ and $0$ for $V=V_5^{(2)}$.
This left us to consider only the split extensions for both modules and three inequivalent (but isomorphic) nonsplit extensions for $V=V_2^{(2)}$. All these, despite having period $12$, were found not to be $2,3$-generated. This final elimination proves that $G_0$ is in fact $B_0$ as claimed in Theorem \ref{main}.

\medskip

{\em Acknowledgement.} The author is thankful to Prof. V.\,D. Mazurov and Dr. A.\,S. Mamontov for a useful discussion of this paper, and to anonymous referees whose comments resulted in an improvement to the original version.

\end{document}